\theoremstyle{plain}
\newtheorem{thm}{Theorem}[section]
\newtheorem{thmx}{Theorem}
\newtheorem{corollary}[thm]{Corollary}
\theoremstyle{definition}
\newtheorem{conjecture}{Conjecture}
\newtheorem{remark}{Remark}[section]
\numberwithin{equation}{section}
\newcommand{\bo}{{\rm O}}
\newcommand{\ds}{\displaystyle}
\newcommand{\dsum}{\ds\sum}
\newcommand{\eqskip}{ \vspace*{2mm}\\ }
\newcommand{\R}{\mathbb{R}}
\newcommand{\N}{\mathbb{N}}
\newcommand{\so}{{\rm o}}
\newcommand{\fr}[2]{\frac{\ds #1}{\ds #2}}
\newcommand{\txtb}{\textcolor{blue}}
\begin{document}

\title{On the (growing) gap between Dirichlet and Neumann eigenvalues}

\author[P. Freitas]{Pedro Freitas}
\author[M. Gama]{Miguel Gama}
\address{Grupo de F\'{\i}sica Matem\'{a}tica, Instituto Superior T\'{e}cnico, Universidade de Lisboa, Av. Rovisco Pais, 1049-001 Lisboa, Portugal}
\email{pedrodefreitas@tecnico.ulisboa.pt,\, miguel.gama@tecnico.ulisboa.pt}

\date{\today}

\begin{abstract}
We provide an answer to a question raised by Levine and Weinberger in their $1986$ paper concerning the
difference between Dirichlet and Neumann eigenvalues of the Laplacian on bounded domains in $\R^{n}$. More precisely,
we show that for a certain class of domains there exists a sequence $p(k)$ such that
$\lambda_{k}\geq \mu_{k+ p(k)}$ for sufficiently large $k$. This sequence, which is given explicitly
and is independent of the domain, grows with $k^{1-1/n}$ as $k$ goes to infinity, which we conjecture to be optimal.
We also prove the existence of a sequence, now not given explicitly and
only of order $k^{1-3/n}$ but valid for bounded Lipschitz domains in $\R^{n} (n\geq4)$, for which a similar
inequality holds for all $k$. We then frame these general results with some specific planar Euclidean examples such
as rectangles and disks, for which we provide bounds valid for all eigenvalue orders.
\end{abstract}

\keywords{Laplace operator; Dirichlet and Neumann eigenvalues; Isoperimetric inequality}
\subjclass[2020]{\text{Primary: 35P15 Secondary: 35P20}}
\maketitle

\section{Introduction}
We consider the Dirichlet and Neumann eigenvalue problems for the Laplacian on a bounded domain $\Omega$
in $\R^{n}$ defined by
\begin{equation}\label{dir}
\left\{
\begin{array}{rl}
  \Delta u + \lambda u = 0, & x\in\Omega\eqskip
  u = 0, & x\in\partial \Omega
 \end{array}
\right.
\end{equation}
and
\begin{equation}\label{neu}
\left\{
\begin{array}{rl}
  \Delta v + \mu v = 0, & x\in\Omega\eqskip
  \fr{\partial v}{\partial \nu} = 0, & x\in\partial \Omega,
 \end{array}
\right.
\end{equation}
respectively, where $\nu$ denotes the outer unit normal on $\partial\Omega$, the boundary of $\Omega$. We shall denote the corresponding spectra by
$\Sigma_{D}$ and $\Sigma_{N}$, respectively, and write the eigenvalues as
\[
 0<\lambda_{1}\leq \lambda_{2} \leq \dots
\]
and
\[
 0=\mu_{1}\leq \mu_{2} \leq \dots.
\]
The study of inequalities of the type $\lambda_{k}\geq \mu_{k+m}$ for all $k$ and some fixed $m$ dates at least as far back as the work of Payne in $1955$, who showed that $\lambda_{k}\geq\mu_{k+2}$ for planar convex domains with a sufficiently smooth boundary~\cite{payn}. It took about thirty years for this
result to be generalised to higher dimensions by Aviles~\cite{avil} and Levine and Weinberger~\cite{levwei} in 1986. Among
other results where the curvature of the boundary plays a key role, it is shown in~\cite{levwei} that for smooth bounded convex domains in $\R^{n}$ we have
\begin{equation}\label{convineq}
 \lambda_{k}\geq\mu_{k+n}.
\end{equation}
More recently, it was shown by Rohleder that Payne's result may be extended to planar simply-connected domains~\cite{rohl}.

The other main development in this direction was made by Friedlander in $1991$, who proved a conjecture
of Payne's (see~\cite{levi,payn2}), namely, that
\begin{equation}\label{paynconj}
 \lambda_{k} > \mu_{k+1}
\end{equation}
for all bounded sufficiently smooth domains $\Omega$ in $\R^{n}$~\cite{frie} -- a different proof that extended this result to
domains for which an embedding condition is satisfied was later given by Filonov~\cite{filo}; see also Remark 1.9 in~\cite{safa},
observing that Filonov's proof holds for general domains. Note also that, in the case of tiling domains,~\eqref{paynconj} is an immediate consequence of P\'{o}lya's inequalities for Dirichlet and Neumann eigenvalues.

In this paper we shall consider two aspects related to inequalities between Dirichlet and Neumann
eigenvalues. The first of these is motivated by a question posed at the end of~\cite{levwei}, asking whether
inequality~\eqref{convineq} may {\it be replaced by a better inequality of the form
\begin{equation}\label{gapk}
  \lambda_{k} > \mu_{\phi(n,k)}
\end{equation}
for convex $n-$dimensional domains.} -- see also the comments in the second to last paragraph on page $44$
of~\cite{mazz}, referring to the behaviour for large $k$. To address this question, we consider the two-term Weyl
asymptotics for eigenvalues of problems~\eqref{dir} and~\eqref{neu}, namely,
\begin{equation}\label{weyl}
 \lambda_{k} = c_{0}k^{2/n} + c_{1} k^{1/n} + \so\left(k^{1/n}\right)
\end{equation}
and
\[
 \mu_{k} = c_{0}k^{2/n} - c_{1} k^{1/n} + \so\left(k^{1/n}\right),
\]
as $k\to\infty$. Here
\[
 \begin{array}{lcl}
  c_{0} = \fr{4\pi^{2}}{\left(\omega_{n}\left|\Omega\right|\right)^{2/n}} & \mbox{ and } &
  c_{1} = \fr{2\pi^{2} \omega_{n-1}\left|\partial \Omega\right|}{n\left( \omega_{n}\left|\Omega\right|\right)^{1+1/n}}
 \end{array}
\]
where $\omega_{n}$ denotes the volume of the unit ball in $\R^{n}$ and, with a slight abuse of notation, $|\Omega|$ and $|\partial\Omega|$
denote the $n-$ and $(n-1)-$volume of $\Omega$ and $\partial\Omega$, respectively. The above asymptotics hold under
a {\it non-periodicity condition} on the set of the billiard orbits defined on $\Omega$, namely, that the
set of such orbits which are periodic has measure zero -- see~\cite{sava} for precise definitions and statements. This
yields that for domains for which these two-term asymptotics are valid, such as convex domains with an analytic boundary or convex
polyhedra, the difference between Dirichlet and Neumann eigenvalues satisfies
\[
 \lambda_{k}-\mu_{k} = 2c_{1}k^{1/n} + \so\left(k^{1/n}\right),
\]
growing to infinity with $k$ and thus suggesting that it might be possible to determine an increasing sequence of
natural numbers $p=p(k)$ for which $\lambda_{k}\geq\mu_{k+p(k)}$. As far as we are aware, all existing results for differences between
Dirichlet and Neumann eigenvalues pertain to a fixed gap between the corresponding Dirichlet and Neumann indexes. One of the aims of this
paper is thus to provide a first answer to Levine and Weinberger's question and show that {\it better inequalities} of the form~\eqref{gapk}
are indeed possible in dimensions two and higher. More precisely, we shall prove that there exists an index function $\phi(n,k)=k+p(k)$ with the
sequence $p(k)$ of order $k^{1-1/n}$ and independent of the domain, such that $\lambda_{k}\geq\mu_{k+p(k)}$ for all
sufficiently large values of $k$.
\begin{thmx}\label{thmpasympt}
 Let $\Omega\subset\R^n$ be a bounded domain satisfying the non-periodicity condition and which is not a ball. Then there
 exists $k^{*} = k^{*}(\Omega)$ such that
 \[
  \lambda_{k}(\Omega) \geq \mu_{k+p(k)}(\Omega), \mbox{ for all } k\geq k^{*}.
 \]
 where $p(k) = \left\lfloor\fr{n \omega_{n-1}}{2\omega_{n}^{1-2/n}} k^{1-1/n}\right\rfloor$.
\end{thmx}
\begin{remark} Concerning the exclusion of the ball from the statement, see the discussion following the proof of the theorem in Section~\ref{proofs},
as well as the specific results for disks given in Section~\ref{disk_sec}.
\end{remark}

For general Lipschitz domains and in dimensions higher than three, we show that there exists a sequence $p(k)$
such that $\lambda_{k}\geq\mu_{k+p(k)}$ for all $k$, as a direct consequence of a result of Safarov and Filonov's
for the difference between the counting functions for Dirichlet and Neumann eigenvalues. However, now $p$ depends on an
unspecified constant and its asymptotic behaviour is slightly worse than that of the sequence above -- see
Theorem~\ref{thmpasympt2} and Remark~\ref{remasympt} for a discussion of the two results.

The second aspect we consider concerning this type of inequality turns up as a consequence of the proof of Theorem~\ref{thmpasympt}.
Together with the above two-term asymptotic expansions, it turns out that another key ingredient appearing in
the proof is the Euclidean geometric isoperimetric inequality, namely,
\begin{equation}\label{geomisosp}
\left|\partial \Omega\right| \geq n  \left|\Omega\right|^{1-1/n}\omega_{n}^{1/n}.
\end{equation}
This points in the direction that convexity might not be a crucial condition, or that at least it might be possible
to hope for a result of this type for all bounded domains. In fact, we see that the isoperimetric inequality plays
a role in determining the form of the asymptotic behaviour of $p$ as $k$ goes to infinity -- see
Remark~\ref{isoperim_manif}. This connection was also noted in~\cite{cms} with respect to the number
of Neumann eigenvalues which are smaller than or equal to the first Dirichlet eigenvalue. We note further that the
proof of the results for rectangles given in Section~\ref{sec_rect}, now valid for all $k$, also makes the corresponding
relation between the perimeter and area of a rectangle appear explicitly. In fact one of the results we obtain is of the form
\[
  \lambda_{k} \geq \mu_{k + \left\lfloor P \sqrt{\fr{k}{\pi A} }\right\rfloor} \;\; \mbox{ for all } k\in\N,
\]
where $\left\lfloor \cdot \right\rfloor$ denotes the floor function, with $P$ and $A$ the perimeter and area of the rectangle,
respectively. This points in the direction that the larger the isoperimetric constant associated with the rectangle, the larger
the gap between the indexes of the corresponding Dirichlet and Neumann eigenvalues appearing in the inequalities, suggesting
that such a result could hold for other domains. However, the examples given in~\cite{hatc} show that this will not necessarily
be the case when the domains are allowed to be nonconvex, while our own results for the disk provide a counterexample at the
low end of the values for the isoperimetric deficit -- see the remarks following Conjecture~\ref{alldomains}.

It is possible to do variations on these inequalities, either emphasising the dependence on the side-lengths of the rectangle
or on obtaining inequalities which are independent of these -- see Section~\ref{sec_rect} for other results.

On the other hand, it is known that inequalities of this type between Dirichlet and Neumann eigenvalues in
some non-Euclidean settings such as spheres, may not hold and may, in fact, be reversed -- see the results and discussions
in~\cite{ashlev},~\cite{levi} and~\cite{mazz}; in particular, the second and third papers both refer back to a result that may
be found in~\cite{chav}, namely, that for geodesic disks on $\mathbb{S}^{n}$ whose radius is strictly between $\pi/2$ and $\pi$,
we have $\lambda_{1}<\mu_{2}$. Note that for these disks the non-periodicity
condition is not satisfied, again suggesting this to be a relevant condition. We also have that the isoperimetric inequality
satisfied by domains on $\mathbb{S}^{n}$ is, of course, not the same as that in $n-$Euclidean space but, as we
shall see in Section~\ref{sec-sphere}, this does not, by itself, perclude inequalities analogous to those
in Euclidean space to hold. The form of the isoperimetric inequaltiy does, however, control what is possible and
to what extent.

Based on the combination of results obtained we believe that the two key ingredients mentioned above and which appear in the
proof of Theorem~\ref{thmpasympt} are decisive factors for a result of this type to hold. As such, we formulate the following
conjecture for general Euclidean bounded domains -- note that the non-periodicity condition is conjectured to hold for general
Euclidean domains.
\begin{conjecture}\label{alldomains}
There exist constants $C_{n}$ in $(0,1]$, depending only on the dimension, such that
the Dirichlet and Neumann eigenvalues of a bounded domain $\Omega$ in $\R^{n}$ satisfy the inequality
 \[
  \lambda_{k}(\Omega) \geq \mu_{k+p(k)}(\Omega)
 \]
for all positive integer $k$, where $p(k) = \left\lfloor C_{n}\fr{n \omega_{n-1}}{2\omega_{n}^{1-2/n}} k^{1-1/n}\right\rfloor$.
Furthermore, the power $1-1/n$ is optimal.
\end{conjecture}
\noindent In essence, this is stating that although the power given in Theorem~\ref{thmpasympt} cannot be improved, the constant
multiplying the term $k^{1-1/n}$ in that theorem, although correct asymptotically, might be too large for the result to hold for
all $k$ and a factor has to be introduced. This follows from the results for the disk given in Section~\ref{disk_sec}, from which we see
the result given in Theorem~\ref{thmpasympt} cannot hold for certain small values of $k$, at least up to and including dimension four.

\begin{remark}
 As has been pointed out in~\cite{blp}, the claim made in~\cite{levwei} that for the annular sector given by
 \[
  S = \left\{ (r,\theta): 1<r<2 \wedge 0< \theta < 3\pi/2\right\}
 \]
 we have $\mu_{3}>\lambda_{1}$ is incorrect. 
 As is well known, both the Dirichlet and Neumann eigenvalues of $S$ may be obtained by separation of variables
 and then solving the resulting equations involving the Bessel functions of the first and second kind, $J_{\nu}$ and $Y_{\nu}$,
 respectively, $J_{2k/3}$, $J_{2k/3\pm 1}$, $Y_{2k/3}$ and $Y_{2k/3\pm 1}$, $k\in\N_{0}$. The first Dirichlet eigenvalue is given
 by $\lambda_{1}(S) \approx 9.96001$, while the corresponding Neumann eigenvalues are (approximately) given by
 \[
  \left\{0,\, 0.204718,\, 0.811126,\, 1.79721,\, 3.13054,\, 4.77455,\, 6.69575,\, 8.86914,\, 10.2181,\, 10.4649\right\}.
 \]
 We thus see the first Neumann eigenvalue to be larger than $\lambda_{1}(S)$ is $\mu_{9}(S)$. We further
 note that this behaviour is not very different from what one has for a rectangle with side lengths $1$ and $9\pi/4$, corresponding
 to a rectangle with the same perimeter and area as the annular sector. In this case $\lambda_{1}= \pi^2 + 16/81\approx 10.0671$, and
 the first ten Neumann eigenvalues are (approximately) given by
 \[
 \left\{ 0., 0.197531, 0.790123, 1.77778, 3.16049, 4.93827, 7.11111, 9.67901, 9.8696, 10.0671\right\},
 \]
 showing that $\mu_{10}$ is now the first Neumann eigenvalue to equal $\lambda_{1}$.
\end{remark}

Inspired by the result for rectangles in Theorem~\ref{genrect} already mentioned above, and by the results in
Section~\ref{generalplanar} for general planar domains, it is also possible to formulate other conjectures with a sequence $p$
depending on the isoperimetric
constant of the given domain. Again, this cannot hold either in general or even for convex domains, as both the examples given
in~\cite{hatc} for nonconvex domains, and those for the disk in Section~\ref{disk_sec} below show. We thus restrict the conjecture
to convex domains, and, for simplicity, state it only in the planar case.
\begin{conjecture}\label{planarconj}
There exists a constant $C$ in $(0,1$) such that the Dirichlet and Neumann eigenvalues of a bounded convex planar domain $\Omega$
with perimeter $P$ and area $A$ satisfy the inequality
 \[
  \lambda_{k}(\Omega) \geq \mu_{k + \left\lfloor C P \sqrt{\fr{k}{\pi A} }\right\rfloor}(\Omega) \;\; \mbox{ for all } k\in\N.
 \]
\end{conjecture}

\section{General results for Euclidean domains\label{proofs}}

In this section we address what might be considered as proof-of-concept results showing how we should expect the difference between
the Dirichlet and Neumann eigenvalues to grow. We begin with the proof of Theorem~\ref{thmpasympt}, which requires the non-periodicity
condition to hold, and is valid only for sufficiently large enough values of the indexes.

\begin{proof}[Proof of Theorem~\ref{thmpasympt}] For simplicity of notation, we shall drop the explicit dependence of $p$ on $k$.
 From
 \[
  \lambda_{k} = c_{0} k^{2/n} + c_{1} k^{1/n} + r_{D}(k)
 \]
 and
 \[
  \mu_{k} = c_{0} k^{2/n} - c_{1} k^{1/n} + r_{N}(k)
 \]
 we obtain
 \[
 \begin{array}{lll}
  \mu_{k+p} & = & c_{0}(k+p)^{2/n} - c_{1}(k+p)^{1/n}+r_{N}(k+p)\eqskip
  & = & c_{0}k^{2/n}\left(1+\fr{p}{k}\right)^{2/n} - c_{1}k^{1/n}\left(1+\fr{p}{k}\right)^{1/n}+r_{N}(k+p)\eqskip
  & = & c_{0}k^{2/n} + c_{1}k^{1/n} + r_{D}(k)\eqskip
  & & \hspace*{5mm} + c_{0}k^{2/n}\left[\left(1+\fr{p}{k}\right)^{2/n}-1\right]
  -c_{1}k^{1/n}\left[1+\left(1+\fr{p}{k}\right)^{1/n}\right] + r_{N}(k+p)-r_{D}(k)\eqskip
  & = & \lambda_{k} + c_{0}k^{2/n}\left[\left(1+\fr{p}{k}\right)^{2/n}-1\right]
  -c_{1}k^{1/n}\left[1+\left(1+\fr{p}{k}\right)^{1/n}\right] + r_{N}(k+p)-r_{D}(k).
 \end{array}
 \]
Hence
\[
 \lambda_{k} - \mu_{k+p} = r_{D}(k)-r_{N}(k+p) + \underbrace{c_{1}k^{1/n}\left[1+\left(1+\fr{p}{k}\right)^{1/n}\right]
 -c_{0}k^{2/n}\left[\left(1+\fr{p}{k}\right)^{2/n}-1\right]}_{\text{g(n,k,p)}}.
\]
The remainder of the proof is divided into two parts.
We shall first derive a condition for the term $g(n,k,p)$ on the right to be positive, and then show that
this is satisfied by the expression for $p$ given above. We then show that this gives a term of order $k^{1/n}$, and
is thus larger than the difference $r_{D}(k)-r_{N}(k+p)=\so(k^{1/n})$ for sufficiently large $k$.

Letting $x=(k+p)^{1/n}$ we may write $g$ as
\begin{align}
  g(n,k,p) & =  c_{1}\left( k^{1/n} + x\right) -c_{0}\left(x^2-k^{2/n}\right)\nonumber\eqskip
  & = -c_{0}x^{2} +c_{1}x +\left(c_{0}k^{2/n}+c_{1} k^{1/n} \right)\label{gx}.
\end{align}
This will be positive if
\[
\begin{array}{lll}
 \left( 0 \leq\right) x & \leq & \fr{c_{1} + \sqrt{c_{1}^2+4k^{1/n}\left( c_{0}k^{1/n}+c_{1}\right)c_{0}}}{2c_{0}}\eqskip
 & = & \fr{c_{1}}{c_{0}}+k^{1/n}.
\end{array}
\]
We thus have that $p$ must satisfy
\[
\begin{array}{lll}
 p & \leq & \left( \fr{c_{1}}{c_{0}}+k^{1/n}\right)^{n}-k\eqskip
 & = & \left[\fr{\omega_{n-1} \left|\partial\Omega\right|}{2n\left( \omega_{n}\left|\Omega\right|\right)^{1-1/n}}+k^{1/n}\right]^{n}-k
\end{array}
\]
From the Euclidean geometric isoperimetric inequality~\eqref{geomisosp} we have that the right-hand side above satisfies
\[
 \begin{array}{lll}
  \left[\fr{\omega_{n-1} \left|\partial\Omega\right|}{2n\left( \omega_{n}\left|\Omega\right|\right)^{1-1/n}}+k^{1/n}\right]^{n}-k &
  \geq & \left( \fr{\omega_{n-1}}{2\omega_{n}^{1-1/(2n)}}+k^{1/n} \right)^{n}-k\eqskip
  & = & k \left[\left(1+ \fr{\omega_{n-1}}{2\omega_{n}^{1-2/n}}\times \fr{1}{k^{1/n}}\right)^{n}-1\right]\eqskip
  & > & k \left(1+ \fr{n\omega_{n-1}}{2\omega_{n}^{1-2/n}}\times \fr{1}{k^{1/n}}-1\right)\eqskip
  & = & \fr{n\omega_{n-1}}{2\omega_{n}^{1-2/n}}\times {k^{1-1/n}},
 \end{array}
\]
where the strict inequality comes from applying Bernoulli's inequality with $n$ greater than or equal to two. From this it follows
that if we take $p$ to satisfy
\begin{equation}\label{popt}
 p = \left\lfloor\fr{n \omega_{n-1}}{2\omega_{n}^{1-2/n}} k^{1-1/n}\right\rfloor
\end{equation}
then $g$ is a strictly positive function.
It remains to prove that the resulting term when $p$ takes on this value is of order $k^{1/n}$. We first note that from~\eqref{gx}
it follows that $g$ will be strictly decreasing in $p$ for sufficiently large $k$. Thus, showing that this will be of order $k^{1/n}$ for a
value of $p$ larger than that given by~\eqref{popt} will imply the desired result. From
\[
 p = \left\lfloor\fr{n \omega_{n-1}}{2\omega_{n}^{1-2/n}} k^{1-1/n}\right\rfloor \leq
 \fr{n \omega_{n-1}}{2\omega_{n}^{1-2/n}} k^{1-1/n} \leq n\fr{c_{1}}{c_{0}}k^{1-1/n}
\]
we see that it is enough to prove the asymptotic behaviour for $g\left(n,k,n\fr{c_{1}}{c_{0}}k^{1-1/n}\right)$.
Writing
\[
   g\left(n,k,n\fr{c_{1}}{c_{0}}k^{1-1/n}\right)  =  c_{1} k^{1/n}\left[1+\left(1+\fr{\alpha}{k^{1/n}}\right)^{1/n}\right]
   -c_{0}k^{2/n}\left[\left(1+\fr{\alpha}{k^{1/n}}\right)^{2/n}-1\right]
\]
with $\alpha = \fr{n\omega_{n-1}}{2\omega_{n}^{1-1/n}}$, we then have
\[
 \begin{array}{lll}
  g\left(n,k,n\fr{c_{1}}{c_{0}}k^{1-1/n}\right) & \approx & k^{1/n} \left[ c_{1} + c_{1}\left( 1+ \fr{\alpha}{n k^{1/n}}+ \dots\right) - c_{0}k^{1/n}\left( 1+\fr{2\alpha}{n k^{1/n}}+ \dots -1 \right) \right]\eqskip
  & = & k^{1/n} \left( 2c_{1} + \fr{\alpha}{n k^{1/n}} - 2\fr{\alpha c_{0}}{n} + \dots\right)\eqskip
  & = & 2\left( c_{1}-\fr{\alpha c_{0}}{n}\right) k^{1/n} + \bo(1),
 \end{array}
\]
as $k$ goes to infinity.
Since $\alpha < \fr{n c_{1}}{c_{0}}$, provided $\Omega$ is not a ball, the coefficient affecting the leading term
$k^{1/n}$ is strictly positive, proving the result.
\end{proof}
\begin{remark}\label{isoperim_manif}
 We believe the exclusion of the ball from the result to be a purely technical matter, at least in dimensions larger than two.
 In this case, however, it is straightforward to carry out all of the calculations in the proof of Theorem~\ref{thmpasympt}
 (without using Bernoulli's inequality) to obtain, with $p(k) = \lfloor 2\sqrt{k}\rfloor + 1$,
 \[
\begin{array}{lll}
\lambda_{k} - \mu_{k+p} & =  & r_{D}(k)-r_{N}(k+p) \vspace*{3mm}\\
& &  \hspace*{0.5cm} + \fr{2\pi}{ A} \underbrace{\left[
 \left( \sqrt{k+\lfloor 2 \sqrt{k} \rfloor + 1}+\sqrt{k}\right)
\frac{P}{\sqrt{\pi A}}-2\left(\lfloor  2 \sqrt{k} \rfloor + 1\right)\right]} _{\text{g(k)}}.
\end{array}
 \]
 We now see that the case of the disk is indeed special, in that it is the only planar domain for which the function $g(k)$
 above does not dominate the difference of the remainder terms and it is, in fact, bounded -- see also
 Section~\ref{disk_sec}. In any case, this also stresses the connection to the isoperimetric inequality, which appears naturally
 in the proof as a result of the combination of the two coefficients $c_{0}$ and $c_{1}$ in the two-term Weyl asymptotic formula~\eqref{weyl}. Note also that since these coefficients are the same for the problem on manifolds, provided~\eqref{weyl}
 is satisfied, this hints at why we cannot expect the same result to hold on manifolds.
\end{remark}

\begin{remark}
 By writing the function $g$ in the proof of the theorem as
 \[
  g(n,k,p) = c_{0}\left[k^{1/n} + (k+p)^{1/n}\right] \left[ k^{1/n} + \fr{c_{1}}{c_{0}}-(k+p)^{1/n} \right],
 \]
 we see that, under the natural assumption that $p(k) = \so(k)$, the term inside the second pair of square brackets
 above behaves asymptotically as $-p(k) k^{1/n-1}/n$. Thus, unless $p = \bo\left( k^{1-1/n} \right)$, this
 term goes to minus infinity as $k$ grows, yielding that, at least when the two-term Weyl asymptotic formula~\eqref{weyl}
 holds, the asymptotic behaviour of order $k^{1-1/n}$ for $p$ is optimal.
\end{remark}

\begin{remark} We have
\[
\fr{n \omega_{n-1}}{2\omega_{n}^{1-2/n}} =
\frac {\sqrt {\pi}  4^{\frac {1} {n} - 1}  n\left (n\Gamma\left (\frac {n} {2} \right) \right)^{\frac {n - 2} {n}}} {\Gamma\left (\frac {n + 1} {2} \right)}\approx e\sqrt{\fr{\pi}{2}n}+ \so(\sqrt{n}) \mbox{ as } n\to \infty.
\]
This means that for large $n$ we cannot expect this sequence $p$ to be optimal when $k$ is one, at least for
convex domains for which we know that~\eqref{convineq} holds.
\end{remark}
\begin{remark}
 Since the asymptotic behaviour of eigenvalues of the Laplace operator with Robin boundary conditions follows
 the same behaviour as the two-term asymptotics for the Neumann problem, the above result also holds for Robin
 boundary conditions of the form $\partial u/\partial \nu + \beta u = 0$ with positive $\beta$. However, in this
 case and since for any given integer $m$, by making $\beta$ large enough, we can make the first $m$ eigenvalues
 of the Robin spectrum as close to the corresponding first $m$ Dirichlet eigenvalues as we want, we cannot expect
 any such set of inequalities to be valid for all $k$ without imposing any further restrictions. When
 $\beta$ is allowed to be negative, then it was shown in~\cite{gemi} that Friedlander's
 inequalities~\eqref{paynconj} continue to hold, and that this also extends to more general (nonlocal) Robin
 boundary conditions.
\end{remark}

Using the result of Safarov and Filonov's for the difference $N_{N}(\lambda)-N_{D}(\lambda)$ mentioned in the
Introduction~\cite{saffil}, we may prove a result for all $k$ and general Lipschitz domains. The price to
pay is that this does not provide an explicit constant and the asymptotic growth in $k$ is weaker.

\begin{thm}\label{thmpasympt2}
 Let $\Omega$ be a bounded domain in $\R^{n}$ ($n\geq 4$) with a Lipschitz boundary. Then there exists a positive
 constant $C_{\Omega}$ such that
 \[
  \lambda_{k} \geq \mu_{k + \left\lfloor C_{\Omega} k^{1-3/n}\right\rfloor}
 \]
 for all positive integer $k$.
\end{thm}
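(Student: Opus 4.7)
The plan is to combine the Safarov--Filonov lower bound on the counting function difference $N_{N}(\lambda) - N_{D}(\lambda)$ mentioned in the Introduction with a universal (Li--Yau-type) lower bound on the Dirichlet eigenvalues, and then translate the resulting counting-function inequality into the index-shift inequality that is claimed.

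First I would invoke the Safarov--Filonov estimate \cite{saffil}: for a bounded Lipschitz domain $\Omega \subset \R^{n}$ with $n \geq 4$, there exist a positive constant $c_{\Omega}$ and a threshold $\lambda_{0} = \lambda_{0}(\Omega)$ such that
\[
N_{N}(\lambda) - N_{D}(\lambda) \;\geq\; c_{\Omega}\, \lambda^{(n-3)/2}
\qquad\text{for all } \lambda \geq \lambda_{0}.
\]
The hypothesis $n \geq 4$ is precisely what makes the exponent $(n-3)/2$ strictly positive, so that this is a nontrivial lower bound; this is also what fixes the exponent appearing in the statement, since $\lambda_{k}^{(n-3)/2} \sim k^{1-3/n}$ by the Weyl law.

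Next I would evaluate at $\lambda = \lambda_{k}$. Since $N_{D}(\lambda_{k}) \geq k$, this gives
\[
N_{N}(\lambda_{k}) \;\geq\; k + c_{\Omega}\, \lambda_{k}^{(n-3)/2},
\]
and the Li--Yau inequality $\lambda_{k} \geq \tilde c_{\Omega}\, k^{2/n}$ (valid for every $k \in \N$) then yields
\[
N_{N}(\lambda_{k}) \;\geq\; k + C'_{\Omega}\, k^{1-3/n}
\]
for a positive constant $C'_{\Omega}$ depending only on $c_{\Omega}$, $\tilde c_{\Omega}$, and $n$. By the definition of the counting function this is equivalent to
\[
\mu_{k + \lfloor C'_{\Omega} k^{1-3/n}\rfloor}(\Omega) \;\leq\; \lambda_{k}(\Omega),
\]
which is exactly the desired inequality for all $k$ with $\lambda_{k} \geq \lambda_{0}(\Omega)$.

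The main remaining obstacle, though a minor one, is extending this to \emph{every} positive integer $k$, since Safarov--Filonov is an asymptotic statement. For the finitely many exceptional indices $k$ with $\lambda_{k} < \lambda_{0}(\Omega)$, I would simply shrink the constant: choose $C_{\Omega} \leq C'_{\Omega}$ small enough that $\lfloor C_{\Omega} k^{1-3/n}\rfloor \leq 1$ for every such $k$, whereupon Friedlander's inequality $\lambda_{k} \geq \mu_{k+1}$ closes the argument. This merging forces a non-explicit adjustment of the constant, which is the price (relative to Theorem~\ref{thmpasympt}) for having a bound that holds for all $k$ and for general Lipschitz domains; it is also the reason why $C_{\Omega}$ cannot be written down in closed form.
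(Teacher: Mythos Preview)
Your proof follows essentially the same route as the paper's: Safarov--Filonov plus Li--Yau, translated into an index shift. Two minor differences worth noting: the paper cites equation~(4.3) of \cite{saffil} as valid for \emph{all} $\lambda>0$, so no threshold $\lambda_{0}$ arises and your Friedlander patching step is unnecessary; and because the counting functions in~\eqref{countf} use strict inequality (so $N_{D}(\lambda_{k})\leq k-1$, not $\geq k$), the paper evaluates at $\lambda\in(\lambda_{k},\lambda_{k+1})$, where $N_{D}(\lambda)=k$ exactly, and then lets $\lambda\to\lambda_{k}^{+}$.
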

\begin{remark}\label{remasympt}
 This result has the advantage that it does hold for all $k$ and general Lipschitz domains. On the other hand, and
 apart from requiring $n$ greater than or equal to four to provide relevant information, the constant $C_{\Omega}$
 is not explicit and the exponent $1-3/n$ is smaller than the corresponding $1-1/n$ exponent in Theorem~\ref{thmpasympt}.
 The latter exponent corresponds to the conjecture mentioned in Remark~4.3 in~\cite{saffil}.
\end{remark}
\begin{proof}
 Define the Dirichlet and Neumann counting functions by
 \begin{equation}\label{countf}
 \begin{array}{lll}
  N_{D}(\lambda) = \#\left\{ \lambda_{k}\in\Sigma_{D}: \lambda_{k} < \lambda\right\} & \mbox{ and } &
  N_{N}(\lambda) = \#\left\{ \mu_{k}\in\Sigma_{N}: \lambda_{k} < \lambda\right\}
  \end{array}.
 \end{equation}
 We now start from a consequence of Theorem~4.1 in~\cite{saffil}, namely equation~(4.3) in that paper that states
 that for Lipschitz domains in $\R^{n}$ there exists a constant $C= C(\Omega)$ such that these functions satisfy
 \[
  N_{N}(\lambda) - N_{D}(\lambda) \geq C(\Omega) \lambda^{(n-3)/2}
 \]
 for all positive values of $\lambda$. Take $\lambda\in(\lambda_{k},\lambda_{k+1})$ for some $k$. Then $N_{D}(\lambda) = k$ and we have from~\cite{liyau} that $\lambda$ satisfies
 \[
  \lambda > \lambda_{k} \geq \fr{n}{n+2} c_{0}k^{2/n},
 \]
 where $c_{0}$ is the same constant as in the first term in the Weyl asymptotics~\eqref{weyl}. Hence
 \[
  N_{N}(\lambda) \geq k + C(\Omega) \left(\fr{n}{n+2} c_{0}\right)^{(n-3)/2} k^{(n-3)/n} = k + C_{\Omega}k^{1-3/n},
 \]
 for some constant $C_{\Omega}$. We thus conclude that
 \[
  \lambda \geq \mu_{N_{N}(\lambda)} \geq \mu_{k + C_{\Omega}k^{1-3/n}}
 \]
 and since we may take $\lambda$ arbitrarily close to $\lambda_{k}$ we obtain the desired result.
\end{proof}

\section{Two--dimensional examples} The purpose of this section is to explore further examples illustrating
the type of results that may be expected to hold for all indices.

\subsection{Rectangles\label{sec_rect}}
We begin by giving two different results for rectangles to illustrate what may (and may not) be
expected for general domains. Similar results may be obtained for higher dimensions, essentially in the
same way but with the calculations becoming more involved.

\begin{thm}\label{genrect}
 For any rectangle and all positive integer $k$ we have
 $
  \lambda_{k} \geq \mu_{k + \left\lfloor P \sqrt{\fr{k}{\pi A} }\right\rfloor}.
 $
\end{thm}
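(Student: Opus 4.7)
The plan is to work directly with the explicit spectra of the rectangle $R=(0,a)\times(0,b)$. The Dirichlet eigenvalues are $\pi^2(m^2/a^2+n^2/b^2)$ with $m,n\geq 1$, while the Neumann eigenvalues are the same expressions with $m,n\geq 0$. In particular, the Dirichlet spectrum sits inside the Neumann spectrum, and the extra Neumann eigenvalues correspond precisely to lattice points lying on one of the two coordinate axes (including the origin).

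First I would compare the two counting functions at $\lambda=\lambda_k$. Since there are at least $k$ Dirichlet eigenvalues which are $\leq\lambda_k$, the number of Neumann eigenvalues $\leq\lambda_k$ is at least
\[
k + 1 + \lfloor a\sqrt{\lambda_k}/\pi\rfloor + \lfloor b\sqrt{\lambda_k}/\pi\rfloor,
\]
where the extra contribution counts the origin together with the positive integer points on each axis lying in the ellipse $m^2/a^2+n^2/b^2\leq\lambda_k/\pi^2$. Proving the theorem therefore reduces to establishing
\[
1+\lfloor a\sqrt{\lambda_k}/\pi\rfloor+\lfloor b\sqrt{\lambda_k}/\pi\rfloor \;\geq\; \left\lfloor P\sqrt{k/(\pi A)}\right\rfloor.
\]

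The second step is to invoke P\'olya's inequality, which applies to rectangles (as tiling domains) and gives the sharp lower bound $\lambda_k\geq 4\pi k/A$, equivalently $\sqrt{\lambda_k}/\pi\geq 2\sqrt{k/(\pi A)}$. By monotonicity of the floor function this lets me replace $\sqrt{\lambda_k}/\pi$ by $2\sqrt{k/(\pi A)}$ inside each of the two floors. The elementary inequality $\lfloor x\rfloor+\lfloor y\rfloor\geq\lfloor x+y\rfloor-1$, applied with $x=2a\sqrt{k/(\pi A)}$ and $y=2b\sqrt{k/(\pi A)}$, then absorbs the leading $+1$ and yields the target estimate with $P=2(a+b)$, completing the argument.

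The main obstacle is getting the correct constant $P$, rather than a smaller multiple of it, inside the outer floor on the right-hand side. The weaker Li--Yau bound $\lambda_k\geq 2\pi k/A$ would only produce $P/\sqrt{2}$ there, which is insufficient; it is genuinely crucial to invoke the full strength of P\'olya's inequality for tiling domains. The isoperimetric quantity $P/\sqrt{A}$ then appears naturally and controls exactly how the gap $p$ grows with $k$.
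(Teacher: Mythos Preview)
Your argument is correct and follows essentially the same route as the paper: compare the Dirichlet and Neumann counting functions via the axis lattice points, invoke P\'olya's inequality $\lambda_k\geq 4\pi k/A$ for rectangles, and then apply $\lfloor x\rfloor+\lfloor y\rfloor+1\geq\lfloor x+y\rfloor$ to combine the two floors into $\lfloor P\sqrt{k/(\pi A)}\rfloor$. The only cosmetic difference is that you count at $\lambda=\lambda_k$ with weak inequalities, whereas the paper evaluates at $\lambda\in(\lambda_k,\lambda_{k+1})$ and lets $\lambda\downarrow\lambda_k$; both lead to the same conclusion.
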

\begin{proof}
 The proof is similar to that of Theorem~\ref{thmpasympt2}. Let $R$ be a rectangle with side lengths $a$ and $b$.
 The Dirichlet and Neumann eigenvalues of $R$ are given by
 \[
  \lambda_{k} = \pi^2\left(\fr{q^{2}}{a^{2}}+ \fr{r^{2}}{b^{2}}\right), q,r\in\N
 \]
and
 \[
  \mu_{k} = \pi^2\left(\fr{q^{2}}{a^{2}}+ \fr{r^{2}}{b^{2}}\right), q,r\in\N_{0}.
 \]
 As usual we associate this with an integer lattice counting problem on the plane $qr$.
 With $N_{D}$ and $N_{N}$ the counting functions defined by~\eqref{countf} we have that the difference between
 these two functions is now given precisely by the number of points on the positive $q$ and $r$ axes plus one
 (corresponding to the zero Neumann eigenvalue).
 More precisely,
 \[
  N_{N}(\lambda) - N_{D}(\lambda) = 1 + \left\lfloor \fr{a\sqrt{\lambda}}{\pi}\right\rfloor +
  \left\lfloor\fr{b\sqrt{\lambda}}{\pi}\right\rfloor.
 \]
 We now proceed exactly as before, except that since rectangles satisfy P\'{o}lya's conjecture we have the stronger
 inequality $\lambda_{k}\geq 4\pi k/(ab)$ yielding
\begin{align}
  N_{N}(\lambda) & \geq  k + 1 + \left\lfloor 2 \sqrt{\fr{a k}{b\pi}}\right\rfloor +
  \left\lfloor 2 \sqrt{\fr{b k}{a\pi}}\right\rfloor\label{bestp}\eqskip
  & \txtb{\geq}  k + \left\lfloor 2 \left(\fr{a+b}{\sqrt{ab}}\right)\sqrt{\fr{k}{\pi}}\right\rfloor\label{bestp2}\eqskip
  & =  k + \left\lfloor P \sqrt{\fr{k}{\pi A} }\right\rfloor\nonumber.
\end{align}
 The remaining part of the proof now follows as before.
\end{proof}

Following along a similar path, it is possible to derive a result independent of the dimensions of the rectangle.
\begin{corollary}\label{genrect2}
 For any rectangle and all positive integer $k$ the corresponding eigenvalues satisfy
 \[
  \lambda_{k} \geq \mu_{k + \left\lfloor 4\sqrt{\fr{k}{\pi}}\right\rfloor}.
 \]
\end{corollary}
\begin{proof} Apply the geometric isoperimetric inequality for quadrilaterals $P^2\geq 16A$ to the previous
result.
\end{proof}
The constant $4/\sqrt{\pi}$ appearing in the inequality in Theorem~\ref{genrect2} as the coefficient of the term in
$\sqrt{k}$ is optimal for a sequence $p$ of this type, as may be seen from the asymptotic behaviour of the eigenvalues
of the square. We
conjecture that it is possible to take $p(k) = \left\lfloor 4\sqrt{\fr{k}{\pi}}\right\rfloor + 1$, but have only been
able to prove it in the case where $a\geq 4b$, where this is a direct consequence of~\eqref{bestp}. By using the
explicit expressions for the optimal values obtained for each $\lambda_{k}$ for $k=1,\dots,15$ provided in~\cite{antfre},
instead of the lower bound given by P\'{o}lya's inequality, it is possible obtain that the inequality holds with
$p(k) = \left\lfloor\left(\frac{4}{\sqrt {\pi}} - \frac{1}{\sqrt {27}}\right)\sqrt{k}\right\rfloor + 1$, and using further values will help
to improve this. However, although this will allow us to diminish the constant subtracting from $4/\sqrt{\pi}$, it will
remain positive and hence this approach does not allow for a proof of the conjecture.

\subsection{Disks\label{disk_sec}} In the case of disks, the inequality corresponding to that in Theorem~\ref{genrect} would be
\[
 \lambda_{k} \geq \mu_{k+\left\lfloor 2\sqrt{k}\right\rfloor.}
\]
We shall first see that this inequality is, in fact, not satisfied in the case of the disk which, in particular, implies that
should either of Conjectures~\ref{alldomains} or~\ref{planarconj} hold, the corresponding constants $C_{2}$ and $C$ must be smaller
than one. To do this, it is sufficient to determine up to the fifth Dirichlet eigenvalue and compare it with the corresponding
ninth Neumann eigenvalue. Denoting, as usual, by $J_{l}$ the Bessel function of the first kind of order $l$ and by $j_{l,k}$ its
$k^{\rm th}$ (positive) zero, the first five Dirichlet eigenvalues of the unit disk, in increasing
order and including multiplicities, are given by the squares of the zeros $j_{0,1}<j_{1,1}=j_{1,1}<j_{2,1}=j_{2,1}$, so that
$\lambda_{5}(D) = j_{2,1}^2 \approx 5.13562^2$. The Neumann eigenvalues of the unit disk are the squares of zeros of $J_{l}'(r)$,
usually denoted by $j_{l,k}'$. We now have $0=j_{0,1}' < j_{1,1}' = j_{1,1}' < j_{2,1}' = j_{2,1}' < j_{0,2}' < j_{3,1}' = j_{3,1}' 
< j_{4,1}'$, and $\mu_{9} = \left(j_{4,1}'\right)^2 \approx 5.31755^2$, so that $\lambda_{5} < \mu_{5 + \lfloor{2\sqrt{5}}\rfloor}$.

For $p=\lfloor2 \sqrt{k}\rfloor$ as above, this example is not an exception, and for $k$ between one and $50$, for instance,
there are nine cases where $\lambda_{k}<\mu_{k+\lfloor 2\sqrt{k}\rfloor}$, namely $k=5,8,21,27,29,34,42,49,50$. Furthermore, we see
that the first of these examples holds for $p(k) = \lfloor c \sqrt{k}\rfloor$ with $c$ down to $4/\sqrt{5}\approx 1.78885$, for which we still
obtain the Neumann index $9$ when $k$ is five.

Note that, for the disk, the function $p(k)=\lfloor2 \sqrt{k} \rfloor$ is the same as that appearing in Theorem~\ref{thmpasympt}. From
this perspective, we may also consider the ball in higher dimensions and see how it fares in terms of Conjecture~\ref{alldomains}. 
Proceeding as above for the disk, we see that while in dimensions three and four there are still examples where the statement in 
Conjecture~~\ref{alldomains} fails for the ball if $C_{n}$ is taken to be one, no such examples were found in dimensions five to eight.

We shall now determine functions $p(k)$ for which the result holds for the disk for all $k$. For clarity, we first state and prove a
result illustrating the method used, while providing a simple expression for $p$. This is then improved below by
including an arbitrary large number of terms in the sum affecting the $\sqrt{k}$ term.

\begin{thm}\label{disk1}
The Dirichlet and Neumann eigenvalues of the disk satisfy
\[
\lambda_k \geq \mu_{k+2 \left\lfloor \frac{2\sqrt{k}}{\pi} + \frac{1}{4} \right\rfloor}
\]
for all positive integer $k$.
\end{thm}
\begin{proof}
 Denote the counting functions for the Dirichlet and Neumann problems on the unit disk by
\[
\mathscr{N}_{\mathbb{D}}^D(\lambda) = \#\left\{ k : \lambda_k(\mathbb{D}) \leq \lambda^2 \right\}  \mbox{ and }
\mathscr{N}_{\mathbb{D}}^N(\lambda) = \#\left\{ k : \mu_k(\mathbb{D}) \leq \lambda^2 \right\},
\]
respectively. Following the notation in~\cite{flps}, we write
\[
\mathscr{P}_2^D(\lambda) = \left\lfloor\dfrac{\lambda}{\pi}+\dfrac{1}{4}\right\rfloor
+2\sum_{m=1}^{\left \lfloor{\lambda}\right \rfloor }\left\lfloor H(m,\lambda) + \frac{1}{4}\right\rfloor
\]
and
\[
\mathscr{P}_2^N(\lambda) = \left\lfloor\dfrac{\lambda}{\pi}+\dfrac{3}{4}\right\rfloor
+ 2\sum_{m=1}^{\left \lfloor{\lambda}\right \rfloor }\left\lfloor H(m,\lambda) + \frac{3}{4}\right\rfloor,
\]
where
\[
 H(m,\lambda) = \frac{1}{\pi} \left( \sqrt{\lambda^2 - m^2} -m \arccos(\frac{m}{\lambda})\right).
\]
From~\cite[Theorem~2.3]{flps} we have $\mathscr{N}_{\mathbb{D}}^D(\lambda) \leq \mathscr{P}_2^D(\lambda)$ and
$\mathscr{P}_2^N(\lambda) \leq \mathscr{N}_{\mathbb{D}}^N(\lambda)$, yielding
\begin{equation}\label{ineqcount}
\begin{array}{lll}
\mathscr{N}_{\mathbb{D}}^N(\lambda) - \mathscr{N}_{\mathbb{D}}^D(\lambda) & \geq & \mathscr{P}_2^N(\lambda) -\mathscr{P}_2^D(\lambda)\eqskip
& = & \left\lfloor\dfrac{\lambda}{\pi}+\dfrac{3}{4}\right\rfloor - \left\lfloor\dfrac{\lambda}{\pi}+\dfrac{1}{4}\right\rfloor
+ 2\dsum_{m=1}^{\left\lfloor\lambda\right\rfloor}\left( \left\lfloor H(m,\lambda) + \frac{3}{4}\right\rfloor
- \left\lfloor H(m,\lambda) + \frac{1}{4}\right\rfloor\right)\eqskip
& \geq & 2\dsum_{m=1}^{\left\lfloor\lambda\right\rfloor}\left( \left\lfloor H(m,\lambda) + \frac{3}{4}\right\rfloor
- \left\lfloor H(m,\lambda) + \frac{1}{4}\right\rfloor\right).
\end{array}
\end{equation}
Denoting this last expression by $\mathscr{S}(\lambda)$, we have
\[
\mathscr{S}(\lambda) = 2\#\left\{ m : 1\leq m \leq \lfloor\lambda\rfloor \wedge
\left\{H(m,\lambda)\right\} \in \left[ \frac{1}{4}, \frac{3}{4} \right) \right\},
\]
where $\left\{ x \right\} = x - \left\lfloor x \right\rfloor $ is the fractional part of $x$. We thus want to obtain a lower bound on the
number of integer points in $\left[1,\left\lfloor\lambda\right\rfloor\right]$ for which the fractional part of $H(m,\lambda)$ lies in the
interval $[1/4,3/4)$. Considering $H$ as a function of the (now continuous) variable $m$, $H(\cdot,\lambda) : [0,\lambda]\to [0,\lambda/\pi]$,
we have
\[
 \fr{\partial H(m,\lambda)}{\partial m} = -\fr{1}{\pi} \arccos\left(\fr{m}{\lambda} \right).
\]
Hence $H$ is a (strictly) decreasing function of $m$, with derivative between $-1/2$ and $0$. Given $x_{0} \in [0,\lambda)$ such that $H(x_{0},
\lambda) = q+3/4$ for some non-negative integer $q$, we thus have $H(x_{0}+1,\lambda) > q+1/4$ and there exists at least one integer in
$(x_{0},x_{0}+1]$. Thus $\#\{ m : 1\leq m \leq \lfloor\lambda\rfloor \wedge \left\{H(m,\lambda)\right\} \in \left[ \frac{1}{4}, \frac{3}{4} \right)\}$
is larger than or equal to the number of bands of the form $[q+1/4, q+3/4)$, where $q$ is some non-negative integer. This last quantity is given by
\[
\# \left\{ q \in \mathbb{N}_0: \exists \text{ } x_0 \in [0, \lambda) \text{ such that } H(x_0, \lambda) = q+\frac{3}{4} \right\} = \left\lfloor H(0,\lambda) + \fr{1}{4} \right\rfloor =  \left\lfloor \fr{\lambda}{\pi} + \fr{1}{4} \right\rfloor,
\]
and we obtain
\[
\mathscr{S}(\lambda) \geq 2 \left\lfloor \fr{\lambda}{\pi} + \fr{1}{4} \right\rfloor.
\]
Replacing this in~\eqref{ineqcount} with $\lambda = \sqrt{\lambda_{k}}$ yields
\[
\begin{array}{lll}
\mathscr{N}_{\mathbb{D}}^N(\sqrt{\lambda_{k}}) & > & \mathscr{N}_{\mathbb{D}}^D(\sqrt{\lambda_{k}}) + \mathscr{S}(\sqrt{\lambda_{k}})\eqskip
& > & k + 2 \left\lfloor \fr{\sqrt{\lambda_{k}}}{\pi} + \fr{1}{4} \right\rfloor\eqskip
& \geq & k + 2\left\lfloor 2\fr{\sqrt{k}}{\pi} + \fr{1}{4} \right\rfloor,
\end{array}
\]
where in the last inequality we used the fact that the disk satisfies P\'{o}lya's conjecture~\cite{flps}.
\end{proof}

The coefficient of the term in $\sqrt{k}$ in the above result is $4/\pi\approx 1.2732$. By improving the lower bound
on the number of crossings of the graph of $H(\cdot,\lambda)$ with the bands of width $1/2$
described above at integer abscissas, it is possible to improve this value and take it up to approximately $1.59092$ -- see the remark below,
where we then compare this with what may be obtained numerically.
\begin{thm}\label{disk_thm} We have
 \[
\lambda_k \geq \mu_{k+p(k)}
\]
for all positive integer $k$, where
\[
 p(k) = 2\dsum_{n=1}^{\infty} \left\lfloor \fr{2}{\pi} \Big( \sin \left(\frac{\pi}{2n}\right)- \frac{\pi}{2n}
 \cos\left(\frac{\pi}{2n}\right) \Big)\sqrt{k} + \frac{1}{4} \right\rfloor.
\]
\end{thm}
\begin{proof}
 We proceed in the same way as in the proof of Theorem~\ref{disk1}, now improving the estimate of the function $\mathscr{S}(\lambda)$
 defined in that proof. To do this, instead of using the fact that the derivative of $H$ with respect to $m$ is negative and larger
 than $-1/2$, we now observe that this derivative takes on the value of $-1/(2n)$ for positive integer $n$ at points where
\[
 -\fr{1}{\pi} \arccos\left( \fr{m}{\lambda}\right) = -\fr{1}{2n},
\]
 which has solutions of the form $m=\lambda \cos\left(\fr{\pi}{2n}\right)$. Due to the convexity of $H$ as a function of $m$, we
 have
 \[
 \left( 0 > \right) \fr{\partial H(m,\lambda)}{\partial m} \geq -\frac{1}{2n} \mbox{ for all } m \in\left[
 \lambda \cos\left(\fr{\pi}{2n}\right),\lambda\right).
 \]
Using an argument similar to that above, given $x_{0} \in [\lambda \cos(\fr{\pi}{2n}),\lambda)$ such that $H(x_{0},\lambda) = q+3/4$ for some non-negative integer $q$, we then have $H(x_{0}+n,\lambda) > q+1/4$. Since there now exist at least $n$ integers in $(x_{0},x_{0}+n]$,
the set $\{ m : \lambda \cos(\fr{\pi}{2n})\leq m \leq \lfloor\lambda\rfloor \wedge \left\{H(m,\lambda)\right\}\in\left[ \frac{1}{4},\frac{3}{4} \right)\}$ 
has at least $n$ times the number of bands of the form $[q+1/4, q+3/4)$ elements, where $q$ is some non-negative integer, provided the existence of $x_{0}$ in the stated conditions. This value is now given by
\[
\# \left\{ q \in \mathbb{N}_0: \exists \text{ } x_0 \in \left[\lambda \cos\left(\fr{\pi}{2n}\right),\lambda\right) \text{ such that } H(x_0, \lambda) = q+\frac{3}{4} \right\} = \left\lfloor H\left( \lambda \cos\left(\fr{\pi}{2n}\right),\lambda\right) + \fr{1}{4} \right\rfloor,
\]
Since
 \[
  H\left( \lambda \cos\left(\fr{\pi}{2n}\right),\lambda\right) = \fr{\lambda}{\pi}
  \Big( \sin \left(\frac{\pi}{2n}\right)- \fr{\pi}{2n} \cos\left(\fr{\pi}{2n}\right) \Big),
 \]
and keeping in mind that $n-1$ of the integers in $(x_{0},x_{0}+n]$ have already been accounted for for smaller values of $n$, for a given
 $\lambda$, we have
 \[
   \mathscr{S}(\lambda) \geq 2\dsum_{n=1}^{\infty} \left\lfloor \fr{\lambda}{\pi}
   \Big( \sin \left(\frac{\pi}{2n}\right)- \frac{\pi}{2n} \cos\left(\frac{\pi}{2n} \right) \Big)  + \frac{1}{4} \right\rfloor.
 \]
Note that this is, in fact, a finite sum, as for large enough $n$ the term inside the floor function becomes
smaller than one.


 If we now take $\lambda = \sqrt{\lambda_{k}}$ as before, we obtain the lower bound
 \[
 \begin{array}{lll}
  \mathscr{S}(\lambda) & \geq & 2\dsum_{n=1}^{\infty} \left\lfloor \fr{\sqrt{\lambda_{k}}}{\pi}
   \Big( \sin \left(\frac{\pi}{2n}\right)- \frac{\pi}{2n} \cos\left(\frac{\pi}{2n}\right) \Big)  + \frac{1}{4} \right\rfloor\eqskip
   & \geq & 2\dsum_{n=1}^{\infty} \left\lfloor 2\Big( \sin \left(\frac{\pi}{2n}\right)- \frac{\pi}{2n} \cos\left(\frac{\pi}{2n}\right)
    \Big) \sqrt{k}  + \frac{1}{4}\right\rfloor,
 \end{array}
 \]
 where the second step comes from applying P\'{o}lya's inequality for the disk.
\end{proof}
\begin{remark}
 By noting that the expression for $p(k)$ given above is, in fact, a finite sum for each $k$, as the term inside the floor function
 gets smaller than one for large enough $n$, we may write this as
 \[
  p(k) = 2\dsum_{n=1}^{b_{k}} \left\lfloor \fr{2}{\pi} \Big( \sin \left(\frac{\pi}{2n}\right)- \frac{\pi}{2n}
  \cos\left(\frac{\pi}{2n}\right) \Big) \sqrt{k} + \fr{1}{4} \right\rfloor
 \]
 where, for each $k$, $b_{k}$ denotes the smallest integer for which this happens. Since the sequnce multiplying $\sqrt{k}$ is
 decreasing and of order $n^{-3}$, we obtain that $b_{k}$ is of order $k^{1/6}$ and by a simple calculation we have
 \[
  p(k) \approx 2\sum_{n=1}^{\infty} \fr{2}{\pi} \Big( \sin \left(\frac{\pi}{2n}\right)- \frac{\pi}{2n}
  \cos\left(\frac{\pi}{2n}\right) \Big)\sqrt{k} \approx 1.59092 \sqrt{k}
 \]
 as $k$ goes to infinity.
\end{remark}

We end this section by presenting some numerical evaluations of the difference between the Dirichlet and Neumann eigenvalues for
$p(k) = \lfloor C\sqrt{k}\rfloor$ for two different values of $C$ (Figure~\ref{fig_diff2}). The first, with $C=2$, illustrates the
issues raised in Remark~\ref{isoperim_manif} concerning whether or not the disk satisfies Theorem~\ref{thmpasympt} in dimension two.
The second, for $C\approx 1.665221$ points in the direction that the value of $C\approx 1.59092$ obtained in Theorem~\ref{disk_thm}
above is not too far from the optimal constant for the disk, as in this case no negative values for the difference were found.
\begin{figure}[h]
        \centering
        \includegraphics[height=0.23\textheight]{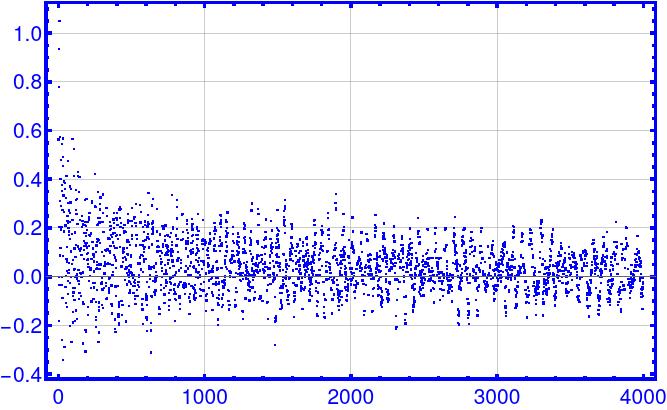}\hspace*{7mm}
        \includegraphics[height=0.23\textheight]{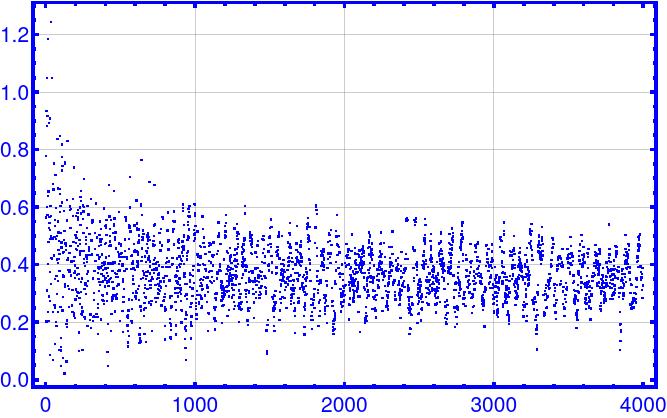}
    \caption{Graphs of the difference $\lambda_{k}(\mathbb{D})-\mu_{k+p(k)}(\mathbb{D})$ with $p(k) = \lfloor C\sqrt{k}\rfloor$,
    for $C=2$ (left) and $C\approx 1.665221$ (right)\label{fig_diff2}}
\end{figure}

We note that the results given in~\cite[Theorem 3.1]{frei} may be applied to the Neumann problem to yield
bounds for the first eigenvalue for each value of $l$, namely,
\[
\begin{array}{l}
\fr{d + 4 l + 4 -\sqrt {(d + 4)^2 + 32 l}} {2 (d + 2 l)}j_{\frac {d}{2} + l - 1, 1}^2 \leq \mu_{d,l}\eqskip
\hspace*{15mm}
\leq
\frac {j_ {\frac {d} {2} + l - 1, 1}^2} {2\left (j_ {\frac {d} {2} + l - 1, 1}^2 - 2\left (d + 2l \right) \right)}
\left[j_ {\frac {d} {2} + l - 1, 1}^2 + \left (d + 2l \right)\left (l - \sqrt {\frac
{j_ {\frac {d} {2} + l - 1, 1}^4} {\left (d + 2l \right)^2} +
      l\left (-\frac {j_ {\frac {d} {2} + l - 1, 1}^2} {\frac {d} {2} + l} + l +
          8 \right)} \right)\right].
\end{array}
\]
Using these bounds allows us to locate with precision the first Neumann eigenvalue corresponding to each harmonic
polynomial (see~\cite{frei} for a discussion of the sharpness of the two bounds), allowing us to avoid intervals where the Bessel
functions in question are too close to zero and numerical algorithms may produce spurious solutions.

\subsection{General planar domains\label{generalplanar}} The application of Bernoulli's inequality in the proof of
Theorem~\ref{thmpasympt} limits the powers of $n$ appearing in that result to $k^{1-1/n}$. By considering
the two-dimensional case we can easily do that calculation explicitly with all the terms and,
if we then keep one of the terms with the perimeter and the area, recover an improved inequality
of the type proved above for rectangles. This will, of course, still be valid only for sufficiently
large $k$ and, in fact we already know from the considerations about the disk in the previous section
that the resulting expression cannot hold for all $k$. In fact, it will be necessary to introduce
a coefficient in the $\sqrt{k}$ term for the proof to work, although it is not completely clear at
this stage whetther this is just a technical issue or has a deeper significance.
\begin{thm}\label{gen2d}
 Let $\Omega$ be a bounded planar domain with area and perimeter $A$ and $P$, respectively, and
 satisfying the non-periodicity condition. Then, for all $\alpha$ in $(0,1)$, there exists
 $k^{*} = k^{*}(\alpha,\Omega)$ such that
 \[
  \lambda_{k} \geq \mu_{k+\left\lfloor \fr{\alpha P}{\sqrt{\pi A}}\sqrt{k}\right\rfloor+1}, \mbox{ for all } k\geq k^{*}.
 \]
\end{thm}
\begin{proof}
 The proof proceeds in the same way as that of Theorem~\ref{thmpasympt} to obtain
 \[
  \lambda_{k} - \mu_{k+p} = r_{D}(k) - r_{N}(k+p)+ \fr{2\sqrt{\pi}}{A}
  \underbrace{\left[\left(\sqrt{k+p}+\sqrt{k}\right)\fr{P}{\sqrt{A}}-2p\sqrt{\pi}\right]}_{\text{g(k,p)}}.
 \]
The function $g(k,p)$ will be non-negative if
 \[
  (0 <)\; p \leq \fr{P^2}{4\pi A}+\fr{P}{\sqrt{\pi A}}\sqrt{k}.
 \]
From the two-dimensional isoperimetric inequality we have that the right-hand side above satisfies
\begin{equation}\label{twoisoperimetric}
 \fr{P^2}{4\pi A}+\fr{P}{\sqrt{\pi A}}\sqrt{k} \geq 1 + \fr{P}{\sqrt{\pi A}}\sqrt{k}
\end{equation}
and for $\alpha$ in $(0,1)$ we take $p(k)=\left\lfloor \fr{\alpha P}{\sqrt{\pi A}}\sqrt{k}
\right\rfloor+1\leq \fr{ P}{\sqrt{\pi A}}\sqrt{k}+\fr{P^2}{4\pi A}$,
yielding $g\left(k,\left\lfloor \fr{P}{\sqrt{\pi A}}\sqrt{k}\right\rfloor+1\right)\geq0$.  On the other hand,
for given $A$ and $P$, and sufficiently large values of $k$, $g(k,p)$ is a decreasing function of $p$.
Writing $c = P/(2\sqrt{\pi A})$, we have $p(k) \leq \fr{\alpha P}{\sqrt{\pi A}}\sqrt{k} +1 = 2\alpha c\sqrt{k}+ 1$,  and
\[
g(k,p) \geq 2\sqrt{\pi} \left[\left( \sqrt{k+2\alpha c\sqrt{k}+1}+\sqrt{k} \right)c - (2\alpha c \sqrt{k}+1)\right] = 4\sqrt{\pi}(1-\alpha)\sqrt{k} +
\bo(1),
\]
as $k$ goes to infinity.
\end{proof}
\begin{remark}
By also bounding the second occurrence of the perimeter and the area in~\eqref{twoisoperimetric} using the isoperimetric
inequality, it is possible to obtain a result with $p(k) = \left\lfloor2\sqrt{k}\right\rfloor+1$. To see this,
proceed as above and note that we may now write
\[
 g(k,2\sqrt{k}+1) = \left( \fr{P}{\sqrt{A}}-2\sqrt{\pi}\right)\left(2\sqrt{k}+1\right),
\]
thus obtaining $\lambda_k \geq \mu_{k+\left\lfloor2\sqrt{k}\right\rfloor+1}$, for sufficiently large $k$, except possibly
in the case of the disk. In fact, for the disk this is not true when $k$ is one either, as $\lambda_{1}(D)<\mu_{4}(D)$,
\end{remark}

\subsection{The sphere $\mathbb{S}^2$\label{sec-sphere}} Proceeding in the same way as above for planar domains,
it is possible to derive a similar result for domains on the sphere that also satisfy the non-periodicity
condition. The resulting expressions are now more involved, mirroring the version of the isoperimetric inequality
on $\mathbb{S}^{2}$, namely,~\cite{levy}
\[
 P^{2} \geq 4\pi A -A^2.
\]

\begin{thm}\label{genS2}
 Let $\Omega\subsetneq \mathbb{S}^2$ be a domain satisfying the non-periodicity condition and which
 is not a geodesic disk. Then there exists $k^{*} = k^{*}(\Omega)$ such that
 \[
  \lambda_{k} \geq \mu_{k+p(k)}, \mbox{ for all } k\geq k^{*},
 \]
 where $p$ is given by
 \[
  p(k) = \left\lfloor 1- \fr{A}{4\pi} + 2 \sqrt{1-\fr{A}{4\pi}}\sqrt{k}\right\rfloor.
 \]
\end{thm}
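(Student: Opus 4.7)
The plan is to follow the proof of Theorem~\ref{gen2d} essentially verbatim, substituting the planar isoperimetric inequality with its spherical counterpart $P^{2}\geq 4\pi A - A^{2}$. First, I would invoke the two-term Weyl asymptotics for the Dirichlet and Neumann Laplacian on a proper subdomain of $\mathbb{S}^{2}$, which have exactly the same form as in the planar case since their leading coefficients depend only on the local geometric data $A$ and $P$, and the non-periodicity assumption on the geodesic billiard flow guarantees an $o(\sqrt{k})$ remainder. With $c_{0}=4\pi/A$ and $c_{1}=2\sqrt{\pi}P/A^{3/2}$ the same algebraic manipulation as in Theorem~\ref{gen2d} gives
\[
 \lambda_{k}-\mu_{k+p} = r_{D}(k)-r_{N}(k+p) + \frac{2\sqrt{\pi}}{A}\underbrace{\left[\frac{P}{\sqrt{A}}\left(\sqrt{k+p}+\sqrt{k}\right)-2p\sqrt{\pi}\right]}_{g(k,p)}.
\]

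Next, I would solve the quadratic inequality $g(k,p)\geq 0$ in the variable $\sqrt{k+p}$, which, exactly as in the planar case, produces the condition
\[
 p \leq \frac{P^{2}}{4\pi A} + \frac{P}{\sqrt{\pi A}}\sqrt{k}.
\]
At this point the only substantive change is the isoperimetric input: from $P^{2}\geq 4\pi A - A^{2}$ one obtains $\frac{P^{2}}{4\pi A}\geq 1-\frac{A}{4\pi}$ and $\frac{P}{\sqrt{\pi A}}\geq 2\sqrt{1-\frac{A}{4\pi}}$, so that the proposed choice
\[
 p(k) = \left\lfloor 1 - \frac{A}{4\pi} + 2\sqrt{1-\frac{A}{4\pi}}\sqrt{k}\right\rfloor
\]
satisfies the required bound, yielding $g(k,p(k))\geq 0$.

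Finally, to promote non-negativity into dominance over the $o(\sqrt{k})$ remainder, I would use that $\Omega$ is not a geodesic disk, so the spherical isoperimetric inequality is strict. This gives a positive excess in the coefficient of $\sqrt{k}$ between $P/\sqrt{\pi A}$ and $2\sqrt{1-A/(4\pi)}$, and therefore $g(k,p(k))$ grows like a positive multiple of $\sqrt{k}$, overwhelming $r_{D}(k)-r_{N}(k+p(k))=o(\sqrt{k})$ for $k\geq k^{*}(\Omega)$. The only step that is not purely a rerun of the planar argument is the invocation of the two-term Weyl asymptotics on $\mathbb{S}^{2}$ under the non-periodicity hypothesis, but this is standard from Ivrii-type results and requires no modification from the Euclidean formulation; everything else is bookkeeping with the spherical isoperimetric constant in place of the Euclidean one.
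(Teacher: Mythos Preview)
Your proposal is correct and follows exactly the approach the paper itself indicates: the paper's proof of Theorem~\ref{genS2} consists solely of the sentence ``The proof proceeds in the same way as that for Theorem~\ref{gen2d},'' and you have spelled out precisely what that means, replacing the planar isoperimetric inequality by its spherical version $P^{2}\geq 4\pi A - A^{2}$ at the appropriate step. Your derivation of the bounds $\tfrac{P^{2}}{4\pi A}\geq 1-\tfrac{A}{4\pi}$ and $\tfrac{P}{\sqrt{\pi A}}\geq 2\sqrt{1-\tfrac{A}{4\pi}}$ is accurate, and the appeal to the Ivrii/Safarov--Vassiliev two-term asymptotics on $\mathbb{S}^{2}$ under the non-periodicity hypothesis is the correct justification for carrying over the same $c_{0},c_{1}$.
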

\begin{proof}
The proof proceeds in the same way as that for Theorem~\ref{gen2d}.
\end{proof}
As mentioned in the Introduction, geodesic disks on $\mathbb{S}^{2}$ with radius larger than or equal to $\pi/2$ do not
satisfy the non-periodicity condition. On the other hand, those with radius smaller than $\pi/2$ do, and again we would
expect these to satisfy the above inequality.

\section*{Acknowledgements} It is a pleasure to acknowledge several exchanges with Mark Ashbaugh concerning this
problem and, in particular, for having mentioned~\cite{levi} and other relevant articles. We are also indebted to Pedro
Antunes for several conversations concerning the numerical evaluation of eigenvalues of balls and annular sectors.
This work was partially supported by the Funda\c{c}\~ao para a Ci\^encia e a Tecnologia, Portugal, via the research centre
GFM, reference UID/00208/2023.


\begin{thebibliography}{99999}

\bibitem{antfre}
P.R.S. Antunes and P. Freitas,
Optimal spectral rectangles and lattice ellipses,
{\it Proc. Royal Soc. A Math. Phys. Eng. Sci.} {\bf 469} (2013), 20120492.

\bibitem{ashlev}
M. Ashbaugh and H.A. Levine,
Inequalities for Dirichlet and Neumann eigenvalues of the Laplacian for domains on spheres,
{\it Journ. \'{E}qu. D\'{e}riv. Partielles} (1997), 1--15.

\bibitem{avil}
P. Aviles,
Symmetry theorems related to Pompeiu's problem,
{\it Amer. J. Math.} {\bf 108} (1986), 1023--1036.

\bibitem{blp}
R. Benguria, M. Levitin and L Parnovski,
Fourier transform, null variety, and Laplacian's eigenvalues,
{\it J. Funct. Anal.} {\bf 257} (2009) 2088--2123.

\bibitem{chav}
I. Chavel,
{\em Eigenvalues in Riemannian geometry, Pure and Applied Mathematics} {\bf 115}.
Academic Press, Inc., Orlando, FL, 1984.

\bibitem{cms}
G. Cox, S. MacLachlan and L. Steeves,
Isoperimetric relations between Dirichlet and Neumann eigenvalues,
preprint arXiv:1906.10061v1.

\bibitem{filo}
N. Filonov.
On an inequality between Dirichlet and Neumann eigenvalues for the Laplace operator,
{\em Algebra Anal.} {\bf 16} (2004), 172--176; English translation in {\it St. Petersburg
Math. J.} {\bf 16} (2005), 413--416.

\bibitem{flps}
N. Filonov. M. Levitin, I. Polterovich, and D. Sher,
P\'{o}lya's conjecture for Euclidean balls,
{\em Invent. Math.} {\bf 234} (2023), 129--169.

\bibitem{frei}
P. Freitas,
Bessel quotients and Robin eigenvalues,
{\it Pacific J. Math.} {\bf 315} (2021), 75--87.

\bibitem{frie}
L. Friedlander,
Some inequalities between Dirichlet and Neumann eigenvalues,
{\it Arch. Rational Mech. Anal.} {\bf 116} (1991), 153--160.

\bibitem{gemi}
F. Gesztesy and M. Mitrea,
Nonlocal Robin Laplacians and some remarks on a paper
by Filonov on eigenvalue inequalities,
{\em J. Differential Eq.} {\bf 247} (2009), 2871--2896.

\bibitem{hatc}
L. Hatcher,
Geometric inequalities between Dirichlet and Neumann eigenvalues,
preprint (2025).

\bibitem{levi}
H.A. Levine, Some remarks on inequalities between Dirichlet and Neumann eigenvalues,
{\it Maximum Principles and Eigenvalue Problems in Partial Differential
Equations}, P.W. Schaefer, editor, Pitman Research Notes in Mathematics Series,
vol. 175, Longman Scientific and Technical, Harlow, Essex, United Kingdom, 1988,
pp. 121--133.

\bibitem{levwei}
H.A. Levine and H.F. Weinberger, Inequalities between Dirichlet and Neumann eigenvalues,
{\it Arch. Rational Mech. Anal.} {\bf 94} (1986), 193--208.

\bibitem{levy}
P. Levy, {\it Probl\`{e}mes Concretes d'Analyse Fonctionelle}
Gauthier-Villars, Paris, 1951.

\bibitem{liyau}
P. Li and S.-T. Yau,
On the Schr\"odinger equation and the eigenvalue problem.
{\it Comm. Math. Phys.} {\bf 88} (1983), 309--318.

\bibitem{mazz}
R. Mazzeo, Remarks on a paper of L. Friedlander concerning inequalities between Neumann
and Dirichlet eigenvalues,
{\it Int. Math. Res. Not. IMRN}, {\bf 4} (1991), 41--48.

\bibitem{payn}
L.E. Payne, Inequalities for eigenvalues of membranes and plates,
{\it J. Rational Mech. Anal.} {\bf 4} (1955), 517--529.

\bibitem{payn2}
L.E. Payne, Some comments on the past fifty years of isoperimetric inequalities,
{\it Inequalities: Fifty Years On from Hardy, Littlewood, and Polya}, W.N. Everitt,
editor, Marcel Dekker, New York, 1991, pp. 143--161.

\bibitem{rohl}
J. Rohleder,
Inequalities between Neumann and Dirichlet Laplacian eigenvalues on planar domains,
preprint arXiv:2306.12922v1 (2023).

\bibitem{safa}
Y. Safarov, On the comparison of the Dirichlet and Neumann counting functions, In {\it Spectral
Theory of Differential Operators: M.Sh. Birman 80th Anniversary Collection} (T. Suslina,
D. Yafaev eds.), Amer. Math. Soc. Transl. Ser. 2, vol. 225, Providence, RI (2008), 191--204.

\bibitem{saffil}
Yu. G. Safarov and N.D. Filonov,
Asymptotic estimates for the difference between Dirichlet and Neumann counting functions. (Russian. Russian summary)
{\it Funktsional. Anal. i Prilozhen} {\bf 44} (2010), 54--64; translation in
{\it Funct. Anal. Appl.} {\bf 44} (2010), 286--294.

\bibitem{sava}
Yu. Safarov and D. Vassiliev,
{\em The asymptotic distribution of eigenvalues of partial differential operators},
American Mathematical Society, series Translations of Mathematical Monographs, {\bf 155}, 1997.


\end{thebibliography}
\end{document}